\newif\ifdetails
\newcommand{\DETAIL}[1]%
{\ifdetails\par\fbox{\begin{minipage}{0.9\linewidth}\textit{Detail:}
      #1\end{minipage}}\par\fi}
\newcommand{\TODO}[1]%
{\ifdetails\par\fbox{\begin{minipage}{0.9\linewidth}\textbf{TODO:}
      #1\end{minipage}}\par\fi}
\newtheorem{lemma}{Lemma}
\newtheorem{theorem}[lemma]{Theorem}
\newtheorem{corollary}[lemma]{Corollary}
\theoremstyle{remark}
\newtheorem{remark}{Remark}
\newtheorem{definition}[lemma]{Definition}
\DeclareMathOperator{\N}{N}
\DeclareMathOperator{\E}{ecc}
\newcommand{\old}[1]{{}}
\title{Wiener index, number of subtrees, and tree eccentric sequence}
\author{Peter Dankelmann}
\author{Audace A. V. Dossou-Olory}
\thanks{The work is supported by the National Research Foundation (NRF) of South Africa, grant number 118521}
\address{Peter Dankelmann \and Audace A. V. Dossou-Olory \\ Department of Mathematics and Applied Mathematics \\ University of Johannesburg \\ P.O. Box 524, Auckland Park, Johannesburg 2006, South Africa}
\email{audace@aims.ac.za}
\email{pdankelmann@uj.ac.za}
\subjclass[2010]{Primary 05C05; secondary 05C12, 05C35}
\keywords{Wiener index, number of subtrees, Schultz index, Gutman index, eccentric sequence, caterpillar, extremal structures}
\begin{document}

\begin{abstract}
The eccentricity of a vertex $u$ in a connected graph $G$ is the distance between $u$ and a vertex farthest from it; the eccentric sequence of $G$ is the nondecreasing sequence of the eccentricities of $G$. In this paper, we determine the unique tree that minimises the Wiener index, i.e. the sum of distances between all unordered vertex pairs, among all trees with a given eccentric sequence. We show that the same tree maximises the number of subtrees among all trees with a given eccentric sequence, thus providing another example of negative correlation between the number of subtrees and the Wiener index of trees. Furthermore, we provide formulas for the corresponding extreme values of these two invariants in terms of the eccentric sequence. As a corollary to our results, we determine the unique tree that minimises the edge Wiener index, the vertex-edge Wiener index, the Schulz index (or degree distance), and the Gutman index among all trees with a given eccentric sequence. 
\end{abstract}

\maketitle

\section{Introduction}
The eccentricity $\text{ecc}_G(u)$ of a vertex $u$ in a connected graph $G$ is defined as the distance between $u$ and a vertex farthest from it, that is $$\text{ecc}_G(u)=\max_{v \in V(G)} d_G (u, v)\,,$$ where $d_G (u, v)$ denotes the distance between $u$ and $v$ in $G$, i.e. the length of a shortest $u-v$ path in $G$. The \emph{eccentric sequence} of $G$ is defined as the nondecreasing sequence of the eccentricities of $G$. It is the second oldest sequence associated with a graph, after the degree sequence. A sequence of positive integers is said to be eccentric if it is the eccentric sequence of some graph. The study of eccentric sequences in graphs was initiated in a 1975 paper~\cite{Lesniak1975} by Lesniak who showed that each entry, except possibly the smallest in an eccentric sequence, appears at least twice. In~\cite{Behzad1976} Behzad and Simpson gave a further necessary condition for a sequence to be eccentric, and also found few properties of graphs with a given eccentric sequence. Deciding if a given sequence of integers is eccentric is, in general, difficult~\cite[Problem~1]{Buckley2002}. An eccentric sequence $S$ with $m$ distinct entries is called \emph{minimal} if it has no proper eccentric subsequence with $m$ distinct entries. Lesniak~\cite{Lesniak1975} showed that $S$ is eccentric if and only if it has a subsequence with $m$ distinct entries which is eccentric. Unfortunately, deciding if a given sequence of integers is a minimal eccentric sequence appears to be difficult~\cite{Haviar1997,Monoszova2005,Nandakumar1986}. Considering a restriction to graph classes, Dankelmann et al.~\cite{Dankelmann2014} found a characterisation of eccentric sequences of maximal outerplanar graphs; Lesniak~\cite{Lesniak1975} provided a complete characterisation of tree eccentric sequences. A sequence of integers is tree eccentric if it is the eccentric sequence of some tree. A tree is said to be a \emph{caterpillar} if removing all \emph{pendant} vertices, i.e. vertices of degree $1$, produces a path. It was shown implicitly in~\cite{Lesniak1975} that every tree eccentric sequence is the eccentric sequence of some caterpillar. The same observation was later made explicit by Skurnik~\cite{Skurnik1999}, who also determined the exact number of nonisomorphic caterpillars with a given eccentric sequence.

\medskip
This paper is concerned with two problems: \emph{Determine both an exact sharp lower bound on the Wiener index of trees and an exact sharp upper bound on the number of subtrees of trees with a prescribed eccentric sequence.} The \emph{Wiener index} of a graph $G$ is defined as the sum of distances between all unordered pairs of vertices of $G$, while the \emph{number of subtrees} of $G$ is the number of subgraphs of $G$ which are trees.

\medskip
The Wiener index was introduced in 1947 by the chemist H. Wiener~\cite{Wiener1947} who observed its correlation with the physical, chemical and biological properties of certain molecules and molecular compounds. Besides its chemical applications, the Wiener index is also of great interest in graph theory~\cite{plesnik1984sum}. Moreover, research has shown a `negative' correlation between the Wiener index and other distance based topological indices~\cite{wagner2007correlation}. The minimum and maximum Wiener indices of a connected graph in terms of order (number of vertices) are attained by the complete graph and the path, respectively. By placing further restrictions on graphs, one obtains interesting subclasses: see~\cite{DobryninAll} for a survey on extremal results for the Wiener index of trees. In particular, the maximum and minimum Wiener indices of a $n$-vertex tree are attained by the path and the star, respectively. Cambie~\cite{Cambie2018} obtained an asymptotically (as $n \to \infty$) sharp upper bound for the Wiener index of a graph with order $n$ and \emph{diameter} (the maximum eccentricity) at most $d>2$, and also proved a somewhat analogous result for trees. There are other similar results for the Wiener index which prescribe constraints such as minimum degree, edge-connectivity, vertex-connectivity, independence number~\cite{DankelmannMinDegree,DankelmannEtringer,DankelmannMukwembiEdge,DankelmannMukwembi,MukwembiIndepend}; see also the survey~\cite{xu2014survey} for more information. Considering trees with a prescribed degree sequence, the so-called
greedy trees minimise the Wiener index~\cite{WangP2009,wang2008extremal,zhang2008wiener}, while the maximisation problem can be reduced to the study of caterpillars~\cite{Cela,Shi1993}. The recent paper~\cite{LinSong} studied trees that minimise the Wiener index in the class of all trees with a given segment sequence. In~\cite{PrevPaper} the authors showed that the problem of maximising the Wiener index with a given segment sequence leads to the study of the so-called quasi-caterpillars.

\medskip
A subtree of a tree $T$ is a connected subgraph of $T$. The parameter \emph{number of subtrees} of a tree has received much attention and is still attracting researchers. The first extremal results on this parameter are due to Sz\'ekely and Wang~\cite {szekely2005subtrees,szekely2007binary}: the structure of binary trees with $n$ leaves that maximise the number of subtrees is given in~\cite{szekely2007binary}, while~\cite{szekely2005subtrees} solves the analogue minimisation problem and also studies $n$-vertex trees that extremise the number of subtrees. For instance, it is known that the $n$-vertex path (resp. $n$-vertex star) has $n(n+1)/2$ (resp. $n-1+2^{n-1}$) subtrees, and these minimise and maximise, respectively, the number of subtrees among all trees of given order. The so-called good binary trees (resp. binary caterpillars) maximise (resp. minimise) the number of subtrees among all binary trees with $n$ leaves. Recently, Chen~\cite{chen2018number} characterised $n$-vertex trees with diameter $d$ that have the maximum number of subtrees, and also solved the minimisation problem in the special case where $d<6$. Results on the number of subtrees with a given degree or segment sequence can be found in~\cite{zhang2013Subtree,zhang2015SubtreeMin}. Paper~\cite{xiao2017trees} mentions that the number of subtrees of a graph was shown in~\cite{ZhaoPhD} to correlate with the reliability of a network with possible vertex/edge failure in the sense that networks with smaller number of subtrees would be less reliable.

\medskip
In this note, we shall determine the structure of all trees that minimise the Wiener index or maximise the number of subtrees, given the eccentric sequence. It will be shown that those extremal trees are caterpillars and coincide in both cases. We shall also provide formulas for the corresponding extreme values of these two invariants. Finally, we mention that the very same tree minimises the edge Wiener index, the vertex-edge Wiener index, the Schultz index, and the Gutman index among all trees with a given eccentric sequence.

\section{Preliminaries and main results}
For graph theoretical terminology not specified here, we refer to~\cite{WestBook}. If $S$ is a subset of vertices (resp. edges) of $G$, then we write $G-S$ to mean the graph obtained from $G$ by deleting all elements of $S$. We simply write $G-l$ instead of $G-\{l\}$. The set of all neighbours of $v\in V(G)$ in $G$ will be denoted by $\mathcal{N}_G(v)$. If $T$ is a caterpillar, then the path that remains after removing its pendant vertices will be called the \emph{backbone} of $T$.

\medskip
The following lemma is well-known; see for instance~\cite{Lesniak1975}. It shall be used without further reference.
\begin{lemma}[\cite{Lesniak1975}]\label{Lemma:calceccent}
Let $u,v$ be two vertices at the maximum distance in a tree $T$. Then we have
$$ \E(w)=\max\{d_T(u,w), d_T(w,v)\}$$ for all $w\in V(T)$.
\end{lemma}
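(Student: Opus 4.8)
The inequality $\E(w) \ge \max\{d_T(u,w), d_T(w,v)\}$ is immediate from the definition of eccentricity, since $u$ and $v$ are particular vertices of $T$; the content of the lemma is the reverse inequality. The plan is to fix an arbitrary vertex $x$ and show that $d_T(w,x) \le \max\{d_T(u,w), d_T(w,v)\}$; taking the maximum over all $x$ then yields the claim.

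First I would set up the unique $u$--$v$ path $P$, which has length $d_T(u,v) = \D(T)$ because $u,v$ are at maximum distance. Since $T$ is a tree, every vertex $y$ has a well-defined projection $y'$ onto $P$, namely the unique vertex of $P$ through which every path from $y$ to a vertex of $P$ passes; this gives $d_T(y,u) = d_T(y,y') + d_T(y',u)$ and $d_T(y,v) = d_T(y,y') + d_T(y',v)$.

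The key step is to exploit the maximality of $d_T(u,v)$ to control how far any vertex can lie off the path $P$. Applying $d_T(x,u) \le d_T(u,v)$ and $d_T(x,v) \le d_T(u,v)$ to the projection identities for $x$ yields $d_T(x,x') \le \min\{d_T(x',u), d_T(x',v)\}$; that is, the distance from $x$ to $P$ never exceeds the distance along $P$ from $x'$ to either endpoint.

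Finally I would split according to the relative position of the projections $w'$ and $x'$ along $P$. If $w'$ lies between $u$ and $x'$ (inclusive), then $u, w', x', v$ occur in this order along $P$, the unique $w$--$x$ path decomposes as $d_T(w,x) = d_T(w,w') + d_T(w',x') + d_T(x',x)$, and replacing $d_T(x',x)$ by the larger quantity $d_T(x',v)$ gives exactly $d_T(w,v)$, so $d_T(w,x) \le d_T(w,v)$. The symmetric case, where $x'$ lies between $u$ and $w'$, gives $d_T(w,x) \le d_T(w,u)$ by exchanging the roles of $u$ and $v$; and the degenerate case $w' = x'$ is handled directly by $d_T(w,x) \le d_T(w,w') + d_T(x',x) \le d_T(w,w') + d_T(w',u) = d_T(w,u)$. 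In every case $d_T(w,x) \le \max\{d_T(u,w), d_T(w,v)\}$, as required. I expect the only delicate point to be the careful bookkeeping of the order of $w'$, $x'$, $u$, $v$ along $P$ and the handling of the degenerate case $w' = x'$; once the projection identities and the maximality estimate are in place, the rest is routine.
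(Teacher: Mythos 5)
The paper does not prove this lemma at all --- it is quoted from Lesniak's 1975 paper and used as a known fact --- so there is no in-paper argument to compare against. Your proof is correct and complete. The easy direction $\E(w)\ge\max\{d_T(u,w),d_T(w,v)\}$ is indeed immediate, and your reverse direction is the standard argument: project every vertex onto the diametral path $P$, use the maximality of $d_T(u,v)$ to get $d_T(x,x')\le\min\{d_T(x',u),d_T(x',v)\}$ for every $x$, and then the case analysis on the order of $w'$ and $x'$ along $P$ (including the degenerate case $w'=x'$, which you correctly handle via the triangle inequality through the common projection rather than the path decomposition) gives $d_T(w,x)\le\max\{d_T(u,w),d_T(w,v)\}$ for every $x$. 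All three cases are covered and each estimate you use is valid, so the argument goes through.
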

The next result is due to Lesniak~\cite{Lesniak1975} and characterises tree eccentric sequence.
\begin{theorem}\label{MainResult}
For $n>2$, a nondecreasing sequence $S=(a_1,a_2,\ldots,a_n)$ of positive integers is a tree eccentric sequence if and only if 
\begin{enumerate}[i)]
	\item $a_1=a_n/2$ and $a_1\neq a_2$, or $a_1=a_2=(1+a_n)/2$ and $a_2\neq a_3$,
	\item for every integer $a_1<k\leq a_n$, we have $a_j=a_{j+1}=k$ for some $2\leq j\leq n-1$.
\end{enumerate}
\end{theorem}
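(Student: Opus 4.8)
The plan is to prove both directions using Lemma~\ref{Lemma:calceccent} as the central tool, since it lets us read off every eccentricity directly from the distances to the two endpoints of a diametral path. Throughout, write $d=a_n$ for the diameter and recall that $n>2$ forces $d\ge 2$.

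For necessity, suppose $S$ is the eccentric sequence of a tree $T$. I would fix a diametral pair $u,v$ and the $u$--$v$ path $p_0=u,p_1,\ldots,p_d=v$. By the lemma, $\E(p_i)=\max\{i,d-i\}$, so as $i$ runs from $0$ to $d$ these values sweep out every integer in $[\lceil d/2\rceil,d]$, and every value $k$ with $\lceil d/2\rceil<k\le d$ is attained by the two distinct vertices $p_k$ and $p_{d-k}$. First I would show the radius equals $\lceil d/2\rceil$: the bound $a_1\ge\lceil d/2\rceil$ follows from $\E(w)=\max\{d_T(w,u),d_T(w,v)\}\ge\tfrac12\bigl(d_T(w,u)+d_T(w,v)\bigr)\ge\tfrac{d}{2}$ for every $w$, while $a_1\le\lceil d/2\rceil$ is witnessed by a middle vertex of the path. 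This yields the diameter--radius dichotomy of~i): $a_1=a_n/2$ when $d$ is even and $a_1=(1+a_n)/2$ when $d$ is odd. The same sweep shows each $k$ with $a_1<k\le a_n$ occurs at least twice, so in the nondecreasing ordering its two copies are consecutive and neither is the first entry, giving~ii).

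It then remains, for~i), to pin down the multiplicity of the smallest entry, i.e.\ the number of vertices of minimum eccentricity. Using the lemma again, any $w$ with $\E(w)=\lceil d/2\rceil$ must satisfy $d_T(w,u),d_T(w,v)\le\lceil d/2\rceil$; combining this with the tree identity $d_T(w,u)+d_T(w,v)=d+2\,t$, where $t$ is the distance from $w$ to the diametral path, forces $t=0$ and places $w$ at a middle position of the path. This gives a unique such vertex when $d$ is even (so $a_1\neq a_2$) and exactly two when $d$ is odd (so $a_1=a_2\neq a_3$), completing the necessity.

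For sufficiency I would give an explicit caterpillar construction. Given $S$ satisfying~i) and~ii), set $d=a_n$, take the backbone path $p_0,\ldots,p_d$, and for each value $k$ with $a_1<k\le d$ attach the required surplus of pendant vertices to the backbone vertex $p_{k-1}$, which has eccentricity $k-1$; each new pendant then has eccentricity $k$. The verification splits into a short distance check that every pairwise distance in the resulting tree is at most $d$ (so $u,v$ remain a diametral pair, the attachment points being internal because $1\le k-1\le d-1$), after which the lemma simultaneously computes all eccentricities and confirms that the backbone supplies each value its base number of occurrences while the pendants supply the surplus; a final count matches the total to $n$. I expect the main obstacle to be precisely this sufficiency check: ensuring the added pendants neither push the diameter above $d$ nor create a spurious vertex of eccentricity below $a_1$ that would corrupt the center. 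Both issues hinge on the inequality $1\le k-1\le d-1$, which is guaranteed by conditions~i) and~ii), and this is the point demanding the most care.
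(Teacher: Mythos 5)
The paper does not prove this theorem at all: it is quoted as a known result of Lesniak~\cite{Lesniak1975}, so there is no in-paper proof to compare against. Judged on its own, your argument is correct and complete in outline. The necessity direction is the standard one: Lemma~\ref{Lemma:calceccent} gives $\E(p_i)=\max\{i,d-i\}$ along a diametral path, which supplies every value in $[\lceil d/2\rceil,d]$ with the right base multiplicities; the bound $\E(w)\ge\lceil d/2\rceil$ (after rounding up, since eccentricities are integers) identifies the radius; and the identity $d_T(w,u)+d_T(w,v)=d+2t$ correctly forces any central vertex onto the middle of the path, giving exactly one centre vertex for $d$ even and exactly two for $d$ odd, which is precisely condition~i). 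For sufficiency, your check that $k>a_1=\lceil d/2\rceil$ implies $2k\ge d+2$, so that a pendant at $p_{k-1}$ has eccentricity $\max\{k,d-k+2\}=k$ and the diameter is not increased, is the right computation, and condition~ii) guarantees the surplus $m_k-2$ is nonnegative. One cosmetic point: in the sufficiency direction the inequality $d\ge 2$ does not follow from $n>2$ alone but from the conditions (if $a_n=1$ then~i) together with nondecreasingness is contradictory for $n\ge 3$); this is a one-line fix. It is also worth noting that your construction is exactly the caterpillar $\mathbb{T}(m_l-2,m_{l-1}-2,\ldots,m_2-2)$ that the paper later proves to be the extremal tree, so your sufficiency argument doubles as a verification that this tree indeed realises the sequence $S$ -- a fact the paper uses implicitly.
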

There are usually many vertices having the same eccentricity in a graph $G$. For this reason, we shall write $(b_1^{(m_1)},b_2^{(m_2)},\ldots, b_l^{(m_l)})$ for the eccentric sequence of $G$, where we mean that $G$ has precisely $l$ distinct eccentricities $b_1 < b_2< \cdots < b_l$ whose multiplicities are $m_1,m_2,\ldots,m_l$. Thus $b_1$ (resp. $b_l$) is the radius (resp. diameter) of $G$ and $|V(G)|=m_1+m_2+\cdots+m_l$. Theorem~1 in~\cite{Lesniak1975} states that $b_{j+1}=b_j +1$ for all $1\leq j \leq l-1$. Therefore, the sequence $(b_1^{(m_1)},b_2^{(m_2)},\ldots, b_l^{(m_l)})$ is completely determined by knowing the values of $b_1,l,m_1,\ldots,m_l$. The parameter $l$, \emph{number of distinct eccentricities}, is a subject of very recent study in~\cite{alizadeh2019eccentric} -- there, it is called the eccentric complexity of $G$. For trees, $m_1=1$ and $b_l=2b_1$, or $m_1=2$ and $b_l=2b_1-1$.

\medskip
Given a tree eccentric sequence $S=(b_1^{(m_1)},b_2^{(m_2)},\ldots, b_l^{(m_l)})$, we shall denote by $\mathcal{T}_S$ the set of all trees whose eccentric sequence is $S$, and by $\mathcal{C}_S$ the set of all caterpillars whose eccentric sequence is $S$. Throughout the paper, we assume that $m_1+m_2+\cdots +m_l>2$.

\begin{definition}
Given an integer $q>0$, we define $\mathbb{T}(t_1,t_2,\ldots,t_r)$ to be the caterpillar constructed from the path $P:v_0,v_1,\ldots,v_{q+1}$ by attaching $t_j$ pendant vertices at $v_j$ for all $1\leq j\leq r$, where $r=q/2$ if $q$ is even, and $r=(q+1)/2$ if $q$ is odd.
\end{definition}

Our main result reads as follows:

\textbf{Main result}: Let $S=(b_1^{(m_1)},b_2^{(m_2)},\ldots, b_l^{(m_l)})$ be a given tree eccentric sequence such that $m_1+m_2+\cdots +m_l>2$. Then $\mathbb{T}(m_l-2,m_{l-1}-2,\ldots,m_2-2)$ minimises the Wiener index and maximises the number of subtrees among all trees whose eccentric sequence is $S$. In each case, $\mathbb{T}(m_l-2,m_{l-1}-2,\ldots,m_2-2)$ is unique with this property.

\medskip
The proof for the minimum Wiener index as well as the corresponding formula is given in Section~\ref{Sec:MinWiener}; that of the maximum number of subtrees as well as the corresponding formula is deferred to Section~\ref{Sec:MaxSubtree}. In the final section, we mention other variants of the Wiener index that the very same tree minimises among all trees with a given eccentric sequence.

\section{Minimum Wiener index}\label{Sec:MinWiener}
In this section, we determine the minimum Wiener index of a tree with a given ecentric sequence $S$, and also characterise all trees attaining the bound.
	
\begin{theorem}\label{MainTheo}
Let $S=(b_1^{(m_1)},b_2^{(m_2)},\ldots, b_l^{(m_l)})$ be a given tree eccentric sequence such that $m_1+m_2+\cdots +m_l>2$. Then we have
$$W(T)> W(\mathbb{T}(m_l-2,m_{l-1}-2,\ldots,m_2-2))$$ for all $T \in \mathcal{T}_S$ such that $T$ is not isomorphic to $\mathbb{T}(m_l-2,m_{l-1}-2,\ldots,m_2-2)$. Moreover, 
\begin{align*}
W\big(&\mathbb{T}(m_l-2,m_{l-1}-2,\ldots,m_2-2)\big)= \binom{b_l+2}{3} ~ + ~ \sum_{j=2}^l (m_j-2)(m_j-3)\\
&+ \sum_{2 \leq i < j \leq l} (m_i-2) (m_j-2) (2+j-i) ~ + ~ \sum_{j=1}^{l-1} \Big( \binom{j}{2} + \binom{b_l+1-j}{2}\Big) (m_{l+1-j}-2)\\
&+(b_l+1)\Big(2-2l+ \sum_{j=2}^l m_j\Big)\,.
\end{align*}   
\end{theorem}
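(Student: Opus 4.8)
The plan is to split the statement into the strict inequality (that $\mathbb{T}:=\mathbb{T}(m_l-2,\dots,m_2-2)$ is the unique minimiser) and the closed-form evaluation of $W(\mathbb{T})$, handling them by quite different means. Throughout I fix a diametral path $P:v_0v_1\cdots v_{b_l}$ in a tree $T\in\mathcal T_S$; by Lemma~\ref{Lemma:calceccent} every vertex $w$ satisfies $\E(w)=\max\{d_T(v_0,w),d_T(w,v_{b_l})\}$, so a leaf lying off $P$ whose branch meets $P$ at $v_k$ and which sits at depth $\delta$ from $v_k$ has eccentricity $\max\{k,b_l-k\}+\delta$. This formula is the engine of the whole argument, since it lets me relocate vertices while controlling eccentricities exactly.

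First I would reduce to caterpillars. Suppose $T\in\mathcal T_S$ is not a caterpillar, and let $w$ be a leaf at maximum distance $\delta\ge 2$ from $P$, hanging at $v_k$ with $\E(w)=b_i=\max\{k,b_l-k\}+\delta$. Detach $w$ and reattach it as a pendant vertex at the backbone vertex $v_{k'}$ with $\max\{k',b_l-k'\}=b_i-1$ (such a $k'$ exists because $b_i\ge b_2>b_1$). Since $w$ is a leaf, deleting it changes no distance between the remaining vertices, and reinserting it as a new leaf changes none either; by the displayed eccentricity formula the relocated $w$ still has eccentricity $b_i$, so every vertex keeps its eccentricity and the new tree again lies in $\mathcal T_S$. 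As only the distances from $w$ change, the effect on $W$ is $\sum_{y}\big(d_{\mathrm{new}}(w,y)-d_{\mathrm{old}}(w,y)\big)$, and the point is that lowering the depth from $\delta\ge 2$ to $1$ at the eccentricity-matched backbone position strictly decreases this sum. Iterating drives every off-backbone leaf onto the backbone, producing a caterpillar in $\mathcal C_S$ of strictly smaller Wiener index; hence any minimiser is a caterpillar. I expect the verification that this single-leaf move strictly decreases $W$ in full generality to be the main obstacle, since one must compare the distance of $w$ to all remaining vertices before and after, rather than rely on the cheap edge-split bound.

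It then remains to minimise over $\mathcal C_S$. For a caterpillar the backbone is forced to be $P$, and by the eccentricity formula a pendant of eccentricity $b_i$ can only be attached at $v_{b_l-b_i+1}$ or at its mirror $v_{b_i-1}$; thus the sole freedom is, for each $i$ with $2\le i\le l$, how to split the $m_i-2$ such pendants between the two symmetric positions. Since $P$ is symmetric, the sum of distances from either admissible position to all backbone vertices is the same, and the backbone–backbone contribution is fixed; consequently only the pendant–pendant distances depend on the choice. These are minimised exactly when all pendants sit on one side, which is $\mathbb{T}$, and any other choice strictly lengthens at least one pendant–pendant distance. This yields both minimality and uniqueness, the two one-sided choices giving isomorphic trees.

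Finally I would evaluate $W(\mathbb{T})$ by splitting the vertex pairs into backbone–backbone, pendant–backbone and pendant–pendant, writing $p_j:=m_{l+1-j}-2$ for the number of pendants at $v_j$ ($1\le j\le l-1$). The backbone is a path on $b_l+1$ vertices, contributing $\binom{b_l+2}{3}$. Two pendants at the same $v_j$ lie at distance $2$ and two at $v_j,v_{j'}$ at distance $|j-j'|+2$, so the pendant–pendant total is $\sum_j p_j(p_j-1)+\sum_{j<j'}p_jp_{j'}(|j-j'|+2)$, which after the substitution $i=l+1-j$ becomes the terms $\sum_{j=2}^{l}(m_j-2)(m_j-3)$ and $\sum_{2\le i<j\le l}(m_i-2)(m_j-2)(2+j-i)$. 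For the pendant–backbone part I would use the identity $\sum_{k=0}^{b_l}\big(1+|j-k|\big)=(b_l+1)+\binom{j+1}{2}+\binom{b_l-j+1}{2}$; multiplying by $p_j$, summing over $j$, and reindexing produces the remaining two displayed terms, the factor $(b_l+1)$ combining with $\sum_j p_j=\sum_{j=2}^{l}m_j-2(l-1)$ to give $(b_l+1)\big(2-2l+\sum_{j=2}^{l}m_j\big)$. Summing the three blocks yields the stated formula; this last step is purely computational once the structural results above are in place.
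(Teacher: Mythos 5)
Your reduction to caterpillars contains a genuine gap: the single-leaf relocation you propose does not, in general, decrease the Wiener index, so the step you yourself flag as ``the main obstacle'' is not merely unverified but false. Concretely, take the path $v_0v_1v_2v_3v_4$, attach $x$ to $v_2$, and attach three leaves $w,w_1,w_2$ to $x$; the eccentric sequence is $(2^{(1)},3^{(3)},4^{(5)})$ and $w$ is a leaf at depth $\delta=2$ from $P$ with $\E(w)=4$. Your move relocates $w$ to a pendant position at $v_3$ (or $v_1$). The distances from $w$ to the five path vertices drop from $16$ to $12$, but the distances to $x,w_1,w_2$ each grow by $2$ (from $1,2,2$ to $3,4,4$), so $W$ increases by $2$. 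The problem is structural: when $w$ is pulled out of a branch, it moves \emph{away} from everything left behind in that branch, and the eccentricity constraint forces the new attachment point toward an end of $P$, which also worsens the distances to the path relative to what a naive count suggests; if the branch is populous, the losses dominate. The paper avoids this by a different operation: it picks a non-pendant off-path neighbour $u$ of $v_j$ (with $j\ge d/2$) and transplants \emph{all} of $u$'s descendants in one step from $u$ to $v_{j+1}$, so that every vertex of the moved set $U$ shifts by exactly $2$ towards the large side $R$ and by $2$ away only from the single vertex $u$, giving the clean count $W(T')-W(T)=|U|(2-2|R|)<0$ while preserving eccentricities. Your argument would need to be replaced by something of this kind (or by a careful choice of which leaf to move and a nontrivial accounting over the whole branch).

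The second half of your proposal --- restricting pendants of eccentricity $b_i$ to the two mirror positions $v_{b_i-1}$, $v_{b_l-b_i+1}$, observing that only pendant--pendant distances depend on the split, and showing one-sided placement is the strict minimum --- is essentially the paper's argument and is sound. Your evaluation of $W(\mathbb{T})$ is also correct as a computation; note that your pendant-to-path identity yields the coefficient $\binom{j+1}{2}+\binom{b_l+1-j}{2}$, which agrees with the expression derived inside the paper's proof but not with the $\binom{j}{2}$ printed in the theorem statement (a check on the star $K_{1,3}$ confirms your version), so you should not claim that your reindexing ``produces the displayed terms'' verbatim.
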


\begin{proof}
Let $T \in \mathcal{T}_S$ be a tree that minimises the Wiener index among all trees whose eccentric sequence is $S$. Let $v_0$ and $v_d$ be two vertices at distance $d:=b_l$ in $T$, and denote by $P=v_0,v_1,\ldots,v_d$ the path from $v_0$ to $v_d$ in $T$.

{\sc Claim~1:} $T$ is a caterpillar. \\
Suppose to the contrary that $T$ is not a caterpillar. Then $P$ contains a vertex $v_j$ which has a neighbour $u$ not on $P$, that is not a pendant vertex. We fix $v_j$ and $u$. We may assume that $j \geq \frac{1}{2}d$ since otherwise, if $j<\frac{1}{2}d$, we just reverse the numbering of the vertices $v_0$ to $v_d$. In order to prove Claim~1, we modify $T$ to obtain a tree $T'$ with the same eccentric sequence but smaller Wiener index, a contradiction to our choice of $T$.

Denote by $U$ the set of vertices that are in a component of $T-u$ not containing $P$. Let $L$ and $R$ be the set of vertices in $V(T)-U$ that are in the component of $T-v_{j}v_{j+1}$ containing $v_j$ and $v_{j+1}$, respectively. Define the tree $T'$ as follows: Delete all edges $u y$ with $y\in \mathcal{N}_T(u)- \{v_j\}$, and add all edges $v_{j+1}y$ with $y\in \mathcal{N}_T(u)- \{v_j\}$. Clearly $V(T)=V(T')$. It is easy to see that for two vertices $x$ and $y$ of $T$ we have $d_{T'}(x,y) \neq d_T(x,y)$ only if  $x\in U$ and $y \in R\cup \{u\}$, or vice versa. For such a pair we have
\[ d_{T'}(x,y) = \left\{ \begin{array}{cc}
d_T(x,y) - 2 & \textrm{if $x\in U$ and $y \in R$,} \\
d_T(x,y) + 2 & \textrm{if $x\in U$ and $y=u$.} 
\end{array} \right. \]
Hence, since $R$ contains more than one vertex, we obtain
\[ W(T') - W(T) = |U|(2 - 2|R|) < 0. \]

\medskip
For the proof that $T$ and $T'$ have the same eccentric sequence, it suffices, by Lemma~\ref{Lemma:calceccent}, to show that all vertices in $U\subset V(T)$ preserve their eccentricities in $T'$. First note that $P$ is also a longest path in $T'$. By $j\geq d-j$, we get
\begin{align*}
\E_T (y)&=\max\{d_T(y,v_0),d_T(y,v_d)\}=d_T(y,u)+1 + j\\
&=d_{T'}(y,v_{j+1})+1+j=\E_{T'} (y)
\end{align*}
for all $y\in U$. Hence $S$ is the eccentric sequence of $T'$ whereas $W(T') < W(T)$. This is a contradiction to our choice of $T$, which proves Claim 1. 

\medskip
As a next step, we bound the Wiener index of a caterpillar $T\in \mathcal{C}_S$ from below. Given the backbone $v_1,\ldots,v_q$ of the caterpillar $T$, fix vertices $v_0 \in \mathcal{N}(v_1)-\{v_2\}$ and $v_{q+1} \in \mathcal{N}(v_q)-\{v_{q-1}\}$. Then $P:=v_0, v_1,\ldots, v_{q+1}$ is a longest path of $T$. For every $i\in \{1,2,\ldots,q\}$, let $C_i$ be the set of pendant vertices adjacent to $v_i$ and not on $P$. For $1 \leq j \leq \lceil \frac{q}{2} \rceil$ define $D_j$ to be the set $C_j \cup C_{q+1-j}$. For $1 \leq j \leq \lceil q/2 \rceil$ the set $D_j$ contains all vertices of eccentricity $q+2-j$, except two vertices that are on $P$. Note that by  $S=(b_1^{(m_1)},b_2^{(m_2)},\ldots, b_l^{(m_l)})$ being the eccentric sequence of $T$, we have
\[\textrm{$b_1=\lceil b_l/2 \rceil$, $b_l=q+1$, and $l=\lfloor (q+1)/2 \rfloor +1$}.\] Hence $|D_j| = m_{l+1-j}-2$. Moreover, the sets $V(P), D_1, D_2,\ldots, D_{\lceil \frac{q}{2} \rceil}$ form a partition of $V(T)$. If $A$ and $B$ are subsets of $V(T)$, then we write $W_T(A)$ for the sum of the distances in $T$ between all unordered pairs of vertices in $A$, and $W_T(A,B)$ for the sum of the distances $d_T(a,b)$, where $a\in A$ and $b \in B$. With this notation, we get 
\[ W(T) = W_T(V(P)) + \sum_{j=1}^{\lceil q/2 \rceil} W_T(D_j) 
~~+ \sum_{1 \leq i < j \leq \lceil q/2 \rceil} W_T(D_i, D_j) 
+ \sum_{j=1}^{\lceil q/2 \rceil} W_T(D_j,V(P)). \]
We consider each of the four terms separately. Clearly 
\[ W_T(V(P)) = W(P) = \binom{q+3}{3}\,. \] The distance between any two vertices in $D_j$ is at least $2$. Therefore, 
\[ W_T(D_j) \geq  2 {|D_j| \choose 2} 
= (m_{l+1-j}-2)(m_{l+1-j}-3)\,.\] Note that equality holds only if all vertices in $D_j$ are adjacent to the same vertex of $P$, i.e., if $C_j$ is empty or $C_{q+1-j}$ is empty. 

To bound $W_T(D_i, D_j)$ for $i<j$, note that for $v \in D_i$ and $w\in D_j$,
we have $d_T(v,w)=d_T(v',w')+2$, where $v'$ and $w'$ is the unique vertex adjacent to
$v$ and $w$, respectively, and $v'\in \{v_i, v_{q+1-i}\}$, $w'\in \{v_j, v_{q+1-j}\}$. 
Since by $i<j$, we have
\begin{align*}
&d_T(v_i,v_j) = d_T(v_{q+1-i}, v_{q+1-j})=j-i\,,\\
&d_T(v_i, v_{q+1-j})= d_T(v_{q+1-i}, v_j) = q+1-i-j \geq j-i\,,
\end{align*}
we derive that
$d_T(v,w) \geq 2+j-i$, with equality only if $j=(q+1)/2$, or
\[\textrm{if $j\neq (q+1)/2$ and  $v \in C_i$ and $w\in C_j$, or if $j\neq (q+1)/2$ and $v\in c_{q+1-i}$ and $w\in C_{q+1-j}$.}
\]
Summation yields
\[ W_T(D_i, D_j) \geq |D_i| \cdot |D_j| (2+j-i) = (m_{l+1-i}-2) (m_{l+1-j}-2) (2+j-i)\,, \]
with equality only if $j=(q+1)/2$, or
\[\textrm{if $j\neq (q+1)/2$ and $C_i=C_j=\emptyset$, or if $j\neq (q+1)/2$ and $C_{q+1-i} = C_{q+1-j} = \emptyset$.}
\]

To evaluate $W_T(D_j,V(P))$, note that for every vertex $v\in C_j$ we have 
$$\sum_{w \in V(P)} d_T(v,w) = \sum_{w \in V(P)} (1+d_T(v_j,w))= (q+2) + W_T(\{v_j\}, V(P)),$$ and similarly for $v \in C_{q+1-j}$, we have 
$$\sum_{w \in V(P)} d_T(v,w) = (q+2) + W_T(\{v_{q+1-j}\}, V(P)).$$ A simple calculation shows that
\begin{align*}
W_T(\{v_j\},V(P)) &= W_T(\{v_{q+1-j}\},V(P))= \frac{1}{2}\big( j(j+1) + (q+1-j)(q+2-j)\big)\\
&=\binom{j+1}{2} + \binom{q+2-j}{2}\,.
\end{align*}
Hence
\begin{align*} 
W_T(D_j,V(P)) &= \Big((q+2)+ \binom{j+1}{2} + \binom{q+2-j}{2}\Big) |D_j| \\
	& = \Big((q+2)+ \binom{j+1}{2} + \binom{q+2-j}{2}\Big) (m_{l+1-j}-2)\,.
\end{align*}                
In total, we have established that
\begin{align}\label{FinalWienerFormula}
\begin{split}
W(T)& = W_T(V(P)) + \sum_{j=1}^{\lceil q/2 \rceil} W_T(D_j) 
~~+ \sum_{1 \leq i < j \leq \lceil q/2 \rceil} W_T(D_i, D_j) 
+ \sum_{j=1}^{\lceil q/2 \rceil} W_T(D_j,V(P))\\
&\geq \binom{q+3}{3} + \sum_{j=1}^{\lceil q/2 \rceil} (m_{l+1-j}-2)(m_{l+1-j}-3)\\
&+ \sum_{1 \leq i < j \leq \lceil q/2 \rceil} (m_{l+1-i}-2) (m_{l+1-j}-2) (2+j-i)\\
& + \sum_{j=1}^{\lceil q/2 \rceil} \Big((q+2)+ \binom{j+1}{2} + \binom{q+2-j}{2}\Big) (m_{l+1-j}-2)\,.
\end{split}
\end{align}              
For $q>1$ and $T\in \mathcal{C}_S$, equality holds in~\eqref{FinalWienerFormula} only if $C_i=C_j=\emptyset$ for all $1\leq i<j \leq \lceil q/2 \rceil$, or if $C_{q+1-i}=C_{q+1-j}=\emptyset$ for all $1\leq i<j \leq \lceil q/2 \rceil$. In other words, equality holds in~\eqref{FinalWienerFormula} for $q>1$ only if $T$ is isomorphic to the caterpillar $\mathbb{T}(m_l-2,m_{l-1}-2,\ldots,m_2-2)$.

On the other hand, for $q=1$, the set $\mathcal{C}_S$ contains only one element which is the tree $\mathbb{T}(m_2-2)$. This completes the proof of the theorem.
\end{proof}

\begin{remark}
It is not hard to see from the proof of Theorem~\ref{MainTheo} that $$\mathbb{T}(\underbrace{0,0,\ldots,0}_{\lfloor d/2 \rfloor -1~0's},n-d-1)$$ is the tree of order $n$ and diameter $d>1$ that has the minimum Wiener index; see also~\cite{Zhou2004}. However, it is a challenging and open problem to determine an exact sharp upper bound on the Wiener index of a graph (or tree) with prescribed order and diameter $>4$. Even in the special case of trees of diameter $5$ or $6$, only asymptotically sharp upper bounds are known; see~\cite{Mukwembi2014} and the references cited therein. Cambie~\cite{Cambie2018} obtained an asymptotically (as $n \to \infty$) sharp upper bound for the Wiener index of a graph with order $n$ and diameter at most $d>2$, and also proved a somewhat analogous result for trees. This suggests that the problem of finding the maximum Wiener index among all trees with a prescribed eccentric sequence can be very difficult.
\end{remark}

\section{Maximum number of subtrees}\label{Sec:MaxSubtree}
We denote the $n$-vertex star by $S_n$. By $\N(T)$ we mean the number of subtrees of a tree $T$. For $u \in V(T)$, we denote by $\N(T)_u$ those subtrees of $T$ that contain $u$.

\medskip
We begin with the following simple lemma, whose proof is left to the reader. 
\begin{lemma}\label{USELem}
Let $t>0$ and $n_1,\ldots,n_t\geq 0$ be fixed integers. Then the function
\begin{align*}
F(x_1,\ldots,x_t)=(2^{x_1}-1)\cdots (2^{x_t}-1) + (2^{n_1-x_1}-1)\cdots (2^{n_t-x_t}-1)
\end{align*}
defined by the inequalities $0\leq x_j\leq n_j$ for all $1\leq j \leq t$, reaches its maximum only at $x_1=\cdots =x_t=0$, or at $x_1=n_1,\ldots x_t=n_t$.
\end{lemma}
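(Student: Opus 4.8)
The plan is to maximise $F$ over the box $\prod_{j=1}^t [0,n_j]$ by reducing the joint optimisation to a sequence of one-variable analyses. First I would observe that $F$ is a sum of two product terms, which I will call $G^+(x)=\prod_{j=1}^t(2^{x_j}-1)$ and $G^-(x)=\prod_{j=1}^t(2^{n_j-x_j}-1)$, so that $F=G^++G^-$. The key structural fact is the ``reflection'' symmetry: the substitution $x_j\mapsto n_j-x_j$ for all $j$ swaps $G^+$ and $G^-$ and hence fixes $F$; in particular the two conjectured maximisers $(0,\dots,0)$ and $(n_1,\dots,n_t)$ are exchanged by this reflection, which is a useful consistency check.

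The main step is a convexity argument in a single coordinate with the others held fixed. Fixing all $x_i$ for $i\neq j$ and writing $a=\prod_{i\neq j}(2^{x_i}-1)\geq 0$ and $b=\prod_{i\neq j}(2^{n_i-x_i}-1)\geq 0$, the function becomes
\[
g(x_j)=a\,(2^{x_j}-1)+b\,(2^{n_j-x_j}-1)+\text{(constant in }x_j).
\]
Since $2^{x_j}$ and $2^{n_j-x_j}$ are each convex in $x_j$ and $a,b\geq 0$, the function $g$ is convex on $[0,n_j]$. A convex function on an interval attains its maximum only at an endpoint, so for the maximiser we must have $x_j\in\{0,n_j\}$ for every $j$. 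This reduces the problem from the continuous box to checking the $2^t$ vertices of the box.

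It then remains to show that among these $2^t$ corner points the maximum is attained only at the all-zero and all-$n_j$ corners. At a corner, each factor $2^{x_j}-1$ equals either $0$ (when $x_j=0$) or $2^{n_j}-1$ (when $x_j=n_j$), and symmetrically for the $2^{n_j-x_j}-1$ factors. Thus if the chosen corner has at least one coordinate $0$ and at least one coordinate $n_j$ (a ``mixed'' corner), then $G^+$ contains a zero factor and $G^-$ contains a zero factor, forcing $F=0$ there, whereas at the two pure corners one of $G^+,G^-$ equals $\prod_j(2^{n_j}-1)\geq 0$ and the other vanishes. The one genuine subtlety is strictness: I would note that $\prod_j(2^{n_j}-1)>0$ precisely when every $n_j>0$, so if some $n_j=0$ the corresponding factor is $0$ and the claimed strict maximum can collapse. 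The main obstacle is therefore handling degenerate cases where one or more $n_j=0$: such a coordinate is frozen (the interval $[0,0]$ is a single point, and $x_j=0=n_j$ simultaneously), so the ``two distinct maximisers'' genuinely coincide in those coordinates, and one should interpret the statement with this degeneracy in mind rather than as a failure of the argument. Assuming the nondegenerate case $n_j>0$ for all $j$, the two pure corners give the common strictly positive value $\prod_j(2^{n_j}-1)$ while every mixed corner gives $0$, completing the proof.
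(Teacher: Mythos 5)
The paper gives no proof of this lemma (it is explicitly ``left to the reader''), so there is no in-paper argument to compare against; judged on its own, your convexity-plus-corner-enumeration argument is essentially correct and complete when every $n_j\geq 1$, and it has the merit of working over the real box rather than just the integer points actually used later. One step does need repair, though: the assertion that a convex function on an interval attains its maximum \emph{only} at an endpoint is false for constant functions, and your one-variable restriction $g(x_j)=a(2^{x_j}-1)+b(2^{n_j-x_j}-1)$ is constant exactly when $a=b=0$ (for $a>0$ or $b>0$ it is strictly convex, so there the conclusion is fine). Hence convexity alone does not justify the reduction to the $2^t$ corners. The patch is short and uses only facts you already establish: if a maximiser had $0<x_j^{*}<n_j$ for some $j$, then $g$ would attain its maximum at an interior point, forcing $g$ to be constant, hence $a=b=0$ and $F=0$ at that point; but $F(0,\dots,0)=\prod_j(2^{n_j}-1)>0$ when all $n_j\geq 1$, a contradiction. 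You should state this explicitly rather than rely on ``only at an endpoint.''

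Your remark about the degenerate case is the right instinct but is understated: if some $n_j=0$ while $t\geq 2$ and some other $n_i>0$, then the factor for coordinate $j$ kills both products, so $F\equiv 0$ and \emph{every} point of the box is a maximiser. This is not merely the two claimed maximisers coinciding in a frozen coordinate --- the uniqueness assertion of the lemma is literally false there. So the lemma should be read with the hypothesis $n_j\geq 1$ for all $j$ (or with its uniqueness clause weakened accordingly), and your proof, with the patch above, establishes exactly that nondegenerate statement.
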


\begin{theorem}\label{SubtreeMainTheo}
Let $S=(b_1^{(m_1)},b_2^{(m_2)},\ldots, b_l^{(m_l)})$ be a given tree eccentric sequence such that $m_1+m_2+\cdots +m_l>2$. Then we have
$$\N(T)< \N(\mathbb{T}(m_l-2,m_{l-1}-2,\ldots,m_2-2))$$ for all $T \in \mathcal{T}_S$ such that $T$ is not isomorphic to $\mathbb{T}(m_l-2,m_{l-1}-2,\ldots,m_2-2)$. Furthermore,
\begin{align*}
\N(\mathbb{T}&(m_l-2,m_{l-1}-2,\ldots,m_2-2))= \binom{b_l}{2} -2(l-2) + \sum_{j=2}^l m_j\\
&+ \sum_{p=0}^{b_l-2} \Big( (2^{m_l} -1)\prod_{i=1}^p (2^{m_{l-i}-2} -1) ~+ \sum_{j=2}^{l-3+m_1 -p}  \prod_{i=0}^p (2^{m_{l+1-i-j}-2} -1) \Big)\,.
\end{align*}
\end{theorem}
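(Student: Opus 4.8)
The plan is to mirror the two-part structure that worked for the Wiener index in Theorem~\ref{MainTheo}: first reduce to caterpillars, then identify the unique optimal caterpillar among those in $\mathcal{C}_S$, and finally verify the closed-form count. The key conceptual difference is that the number of subtrees is maximised (not minimised) by the extremal tree, so the inequalities and the extremal placement of pendant vertices will be reversed relative to the Wiener-index argument. Throughout I would exploit the fact that for a caterpillar with backbone $v_1,\dots,v_q$, a subtree is determined by a choice of a contiguous interval $[v_i,v_j]$ of backbone vertices together with, for each backbone vertex in that interval, an arbitrary subset of its attached pendant vertices; this product structure is exactly what Lemma~\ref{USELem} is built to handle.

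\medskip
First I would show that every tree $T\in\mathcal{T}_S$ maximising $\N(T)$ must be a caterpillar. As in Claim~1 of Theorem~\ref{MainTheo}, suppose some vertex $v_j$ on a longest path $P=v_0,\dots,v_d$ has a non-pendant neighbour $u\notin P$; with the same notation $U,L,R$, I would perform the identical surgery that slides the subtree hanging at $u$ onto $v_{j+1}$, obtaining $T'$ with the same eccentric sequence (the eccentricity computation is verbatim from the excerpt). The point is to check that this move strictly \emph{increases} $\N$. Subtrees not meeting $U$ are unaffected, so it suffices to compare the subtrees meeting $U$: each such subtree is $\N(T)_u$-type and factors as (a subtree of the piece in $U$) times (a subtree of the rest reachable through $u$), and moving the attachment point to $v_{j+1}$ cannot decrease the number of backbone positions from which the $U$-part can be extended into $R$. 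A clean way to make this rigorous is to express $\N$ additively over the vertex $u$ using the standard identity $\N(T)=\N(T-U')+\N(T)_u^{\,U}$ for the relevant cut, and argue the surgery weakly improves every term and strictly improves at least one. I expect this to be the \textbf{main obstacle}, since unlike distances — which change by a clean $\pm2$ — subtree counts interact multiplicatively and one must be careful that shifting pendants does not destroy more subtrees than it creates; the safest route is to fix the interval of backbone vertices spanned by a subtree and compare the two configurations position by position.

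\medskip
Next, having reduced to $\mathcal{C}_S$, I would maximise $\N(T)$ over all caterpillars with eccentric sequence $S$. Using the relations $b_l=q+1$ and $|D_j|=m_{l+1-j}-2$ from the excerpt, the number of pendant vertices that may be split between the symmetric positions $v_j$ and $v_{q+1-j}$ is exactly $m_{l+1-j}-2$. Writing $x_j=|C_j|$ and $n_j=m_{l+1-j}-2$, so that $|C_{q+1-j}|=n_j-x_j$, the total subtree count decomposes into a backbone-independent constant plus a sum of terms each of which, for a fixed spanned interval of the backbone, is a product of factors $(2^{x_j}-1)$ on the left end and $(2^{n_j-x_j}-1)$ on the right end. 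Lemma~\ref{USELem} then forces, at the maximum, every pendant bundle to collapse entirely to one side, i.e.\ $x_j=0$ or $x_j=n_j$ for all $j$; combined with the symmetry and the requirement that all the weight be pushed toward one end to maximise the product over the longest runs, this pins down the unique optimum as $\mathbb{T}(m_l-2,m_{l-1}-2,\dots,m_2-2)$. The strictness of Lemma~\ref{USELem} gives uniqueness, matching the claim.

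\medskip
Finally I would evaluate $\N\big(\mathbb{T}(m_l-2,\dots,m_2-2)\big)$ directly to obtain the stated formula. Here I would partition subtrees by whether they contain at least two backbone vertices or at most one. The subtrees contained in the ``closure'' of a single backbone vertex $v_j$ together with its pendant set contribute the $\binom{b_l}{2}-2(l-2)+\sum_{j=2}^l m_j$ part (counting single vertices, pendant edges, and the purely-backbone paths), while subtrees spanning a backbone interval of length $p+1$ contribute the product terms $\prod_{i=0}^{p}(2^{m_{l+1-i-j}-2}-1)$ summed over admissible starting positions $j$ and lengths $p$, with the leading $(2^{m_l}-1)\prod_{i=1}^{p}(2^{m_{l-i}-2}-1)$ accounting for the intervals that reach the maximal-degree end of $\mathbb{T}$. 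The exponents are read off from the definition of $\mathbb{T}(t_1,\dots,t_r)$, where $t_j=m_{l+1-j}-2$ pendant vertices sit at $v_j$, and the ranges of $p$ and $j$ follow from $b_l=q+1$ and $r=\lfloor(q+1)/2\rfloor$ together with the value of $m_1\in\{1,2\}$. This last step is a bookkeeping computation once the interval-based decomposition is fixed, so I would present the decomposition carefully and leave the index-chasing that reconciles it with the displayed closed form to a short verification.
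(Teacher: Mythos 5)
Your proposal follows the paper's proof step for step: the same pendant-sliding surgery to reduce to caterpillars, Lemma~\ref{USELem} applied interval by interval to identify the unique optimal caterpillar, and the same decomposition of subtrees by the backbone interval they span to obtain the closed form. The one place where you stop short of an actual proof is precisely the step you flag as the main obstacle: the strict increase of $\N$ under the surgery. The factorisation you propose is the right one, and the paper completes it with a short explicit computation rather than a position-by-position comparison: writing $B$ for the subtree induced by $U\cup\{u\}$, every subtree meeting both $U$ and its complement factors through the cut vertex ($u$ in $T$, $v_{j+1}$ in $T'$), so $\N(T')-\N(T)=\N(B)_u\bigl(\N(T'-U)_{v_{j+1}}-\N(T-U)_u\bigr)$, and the identities $\N(T-U)_u=1+\N(L-u)_{v_j}\bigl(1+\N(R)_{v_{j+1}}\bigr)$ and $\N(T'-U)_{v_{j+1}}=\N(R)_{v_{j+1}}\bigl(1+2\N(L-u)_{v_j}\bigr)$ yield $\N(T'-U)_{v_{j+1}}-\N(T-U)_u=(\N(L-u)_{v_j}+1)(\N(R)_{v_{j+1}}-1)>0$ because $R$ contains more than one vertex. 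Supplying this computation closes your gap. The remainder of your outline matches the paper; two harmless bookkeeping caveats: in this section the paper takes $C_i$ to be \emph{all} pendants at $v_i$ (including $v_0$ and $v_{q+1}$), so $|D_1|=m_l$ rather than $m_l-2$, and the pendant edges are counted inside the $p=0$ product terms, not in the constant part $\binom{b_l}{2}-2(l-2)+\sum_{j=2}^l m_j$ as your description suggests.
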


\begin{proof}
Let $T \in \mathcal{T}_S$ be a tree that maximises the number of subtrees. By mimicking the proof of Theorem~\ref{MainTheo}, we first show that $T$ must be a caterpillar.

{\sc Claim~1:} $T$ is a caterpillar. \\
Suppose to the contrary that $T$ is not a caterpillar. Let $v_0$ and $v_d$ be two vertices at distance $d:=b_l$ in $T$, and denote by $P=v_0,v_1,\ldots,v_d$ the path from $v_0$ to $v_d$ in $T$. Then $P$ contains a vertex $v_j$ which has a neighbour $u$ not on $P$, that is not a pendant vertex. We fix $v_j$ and $u$. Clearly, we may assume that $j \geq \frac{1}{2}d$. In order to prove Claim~1, we modify $T$ to obtain a tree $T'$ with the same eccentric sequence but greater number of subtrees, a contradiction to our choice of $T$.

Denote by $U$ the set of vertices that are in a component of $T-u$ not containing $P$. Let $L$ and $R$ be the components of $(T-U)-v_{j}v_{j+1}$ containing $v_j$ and $v_{j+1}$, respectively. Define the tree $T'$ as follows: Delete all edges $u y$ with $y\in \mathcal{N}_T(u)- \{v_j\}$, and add all edges $v_{j+1}y$ with $y\in \mathcal{N}_T(u)- \{v_j\}$. It was shown in the proof of Theorem~\ref{MainTheo} that $S$ is the eccentric sequence of $T'$. Since $T-U$ is isomorphic to $T'-U$, the number of subtrees of $T-U$ equals the number of subtrees of $T'-U$. Also the number of subtrees with vertex set contained in $U$ is the same for $T$ and $T'$. Thus in order to prove that $\N(T')>\N(T)$, it suffices to compare the number of subtrees of $T$ that contain both a vertex in $U$ and a vertex of $T-U$ with the number of subtrees of $T'$ that contain both a vertex in $U$ and a vertex of $T'-U$. It is easy to see that the following hold:
\begin{align*}
\N(T-U)_u&=1 + \N(L-u)_{v_j}(1+\N(R)_{v_{j+1}})\,,\\ \N(T'-U)_{v_{j+1}}&=\N(R)_{v_{j+1}} (1+ 2\N(L-u)_{v_j})\,.
\end{align*}
The difference between these two quantities gives
\begin{align*}
\N(T'-U)_{v_{j+1}}& - \N(T-U)_u =(\N(L-u)_{v_j}+1) (\N(R)_{v_{j+1}} -1)>0\,,
\end{align*}
where the strict inequality is due to the fact that $R$ contains more than one element. Let $B$ be the tree induced by $U\cup \{u\}$ in $T$, and $B'$ the tree induced by $U\cup \{v_{j+1}\}$ in $T'$. Every subtree of $T$ that involves both a vertex in $U$ and a vertex of $T-U$ can be obtained by merging at $u$ a $u$-containing subtree of $B$ and a $u$-containing subtree of $T-U$. Similarly, every subtree of $T'$ that involves both a vertex in $U$ and a vertex of $T'-U$ can be obtained by merging at $v_{j+1}$ a $v_{j+1}$-containing subtree of $B'$ and a $v_{j+1}$-containing subtree of  $T'-U$. Thus by $\N(B)_u=\N(B')_{v_{j+1}}$, we get
\begin{align*}
\N(T')-\N(T)&=\N(B')_{v_{j+1}}\cdot \N(T'-U)_{v_{j+1}} - \N(B)_u \cdot \N(T-U)_u\\
&=\N(B)_u (\N(T'-U)_{v_{j+1}} - \N(T-U)_u)>0\,,
\end{align*}
which is a contradiction to the maximality of $T$. Hence, every vertex of $T$ not lying on $P$ must be adjacent to some vertex of $P$, that is $T$ must be a caterpillar.

\medskip
We now derive the structure of the specific caterpillar whose eccentric sequence is $S$ and that has the maximum number of subtrees. Note that $\N(S_n)=2^{n-1} +n-1$ for all $n$. We shall frequently make use of Lemma~\ref{USELem}. Fix a caterpillar $T\in \mathcal{C}_S$ and let $P=v_1,\ldots,v_q$ be the backbone of $T$. Fix vertices $v_0 \in \mathcal{N}(v_1)-\{v_2\}$ and $v_{q+1} \in \mathcal{N}(v_q)-\{v_{q-1}\}$. Then $v_0, v_1,\ldots, v_{q+1}$ is a longest path of $T$. For every $i\in \{1,2,\ldots,q\}$ let $C_i$ be the set of pendant vertices adjacent to $v_i$. For $1 \leq j \leq \lceil \frac{q}{2} \rceil$ define $D_j$ to be the set $C_j \cup C_{q+1-j}$. For $1 \leq j \leq \lceil q/2 \rceil$ the set $D_j$ contains all vertices of eccentricity $q+2-j$, except two vertices that are on $P$ in the case where $j \notin \{1,q\}$. Thus by  $S=(b_1^{(m_1)},b_2^{(m_2)},\ldots, b_l^{(m_l)})$ being the eccentric sequence of $T$, we have
\[\textrm{ $|D_1| = m_l$ and $|D_j| = m_{l+1-j}-2$ for all  $j \neq 1$.}\] Moreover, the sets $V(P), D_1, D_2,\ldots, D_{\lceil \frac{q}{2} \rceil}$ form a partition of $V(T)$. Note that this partition is slightly different from the partition of $V(T)$ given in the proof of Theorem~\ref{MainTheo}.  

If $A_1,A_2,\ldots,A_k$ are subsets of $V(T)$, then we write $\N_T(A_1)$ for the sum of the number of subtrees induced in $T$ by all (non-empty) subsets of $A_1$, and $\N_T(A_1,A_2,\ldots,A_k)$ for the sum of the number of subtrees induced in $T$ by all subsets $S=S_1\cup S_2\cup \cdots \cup S_k$ where for every $j\in\{1,2,\ldots,k\}$, $S_j$ is a non-empty subset of $A_j$. With this notation, we get 
\begin{align*}
\N(T)&=\N_T(V(P))+\sum_{j=1}^{\lceil q/2 \rceil}\N_T(D_j) ~+\sum_{k=2}^{\lceil q/2 \rceil} \sum_{1\leq i_1<\cdots < i_k \leq \lceil q/2 \rceil} \N_T(D_{i_1},\ldots,D_{i_k}) \\
& \quad +\sum_{k=1}^{\lceil q/2 \rceil} \sum_{1\leq i_1<\cdots < i_k \leq \lceil q/2 \rceil} \N_T(D_{i_1},\ldots,D_{i_k},V(P)) \,.
\end{align*}
Clearly, 
\[ \textrm{$\N_T(V(P)) = \N(P) = \binom{q+1}{2}$ ~~ and ~~ $\N_T(D_j) = |D_j|$.}\] For a subtree $B$ of $T$ to contain both a vertex in $C_i$ and a vertex in $C_j$ for $i<j$, it is necessary for $B$ to contain the entire path $v_i,v_{i+1},\ldots,v_j$. Thus \[\textrm{$\N_T(D_{i_1},\ldots,D_{i_k})=0$ for all $i_1<\cdots < i_k$ such that $k>1$.}\] Moreover, only those subsets of $V(P)$ that induce a path in $T$ contribute to the quantity $\N_T(D_{i_1},\ldots,D_{i_k},V(P))$. In addition, for given integers $k\geq 1,~ p\geq 0$ and a set $\{v_j,v_{j+1},\ldots, v_{j+p}\}\subseteq V(P)$, only those indices $i_1,\ldots,i_k$ such that 
\[\textrm{$i_t \in \{j,j+1,\ldots,j+p\}$ or $i_{q+1-t} \in \{j,j+1,\ldots,j+p\}$ for all $1\leq t \leq k$}
\]
contribute to $\N_T(D_{i_1},\ldots,D_{i_k},\{v_j,v_{j+1},\ldots, v_{j+p}\})$. Therefore, we have
\begin{align*}
\sum_{k=1}^{\lceil q/2 \rceil}  \sum_{1\leq i_1<\cdots < i_k \leq \lceil q/2 \rceil} \N_T(D_{i_1},\ldots,D_{i_k},\{v_j,v_{j+1},\ldots, v_{j+p}\}^*)= \prod_{i=0}^p (2^{|C_{j+i}|} -1)
\end{align*}
for all $0\leq p \leq q-1$ and all $1\leq j \leq q -p$, where by $\N_T(D_{i_1},\ldots,D_{i_k},\{v_j,v_{j+1},\ldots, v_{j+p}\}^*)$ we mean the number of those subtrees that involve entirely $v_j,v_{j+1},\ldots, v_{j+p}$. It follows that
\begin{align*}
\sum_{k=1}^{\lceil q/2 \rceil} &\sum_{1\leq i_1<\cdots < i_k \leq \lceil q/2 \rceil} \N_T(D_{i_1},\ldots,D_{i_k},V(P)) = \sum_{p=0}^{q-1}\sum_{j=1}^{q-p}  \prod_{i=0}^p (2^{|C_{j+i}|} -1)\\
&=\sum_{p=0}^{q-1}\sum_{j=1}^{\lfloor q/2 \rfloor -p} \Big( \prod_{i=j}^{p+j} (2^{|C_i|} -1) ~ +  \prod_{i=q+1-p-j}^{q+1-j} (2^{|C_i|} -1) \Big)\,.
\end{align*}
On the other hand, for every $p \in \{0,1,\ldots,q-1\}$ and $ j \in \{1,2,\ldots,\lfloor q/2 \rfloor -p\}$, we have
\begin{align*}
\prod_{i=j}^{p+j} (2^{|C_i|} -1) ~ +  \prod_{i=q+1-p-j}^{q+1-j} (2^{|C_i|} -1)  \leq (2^{|D_j|} -1)(2^{|D_{j+1}|} -1)\cdots (2^{|D_{j+p}|} -1)\,,
\end{align*}
with equality only if $C_j=C_{j+1}=\cdots=C_{j+p}=\emptyset$, or if $C_{q+1-j-p}=C_{q+1-j-p+1}=\cdots=C_{q+1-j}=\emptyset$. In total, we have established that
\begin{align*}
\N(T)&=\N_T(V(P))+\sum_{j=1}^{\lceil q/2 \rceil}\N_T(D_j) ~ + ~\sum_{k=1}^{\lceil q/2 \rceil} ~ \sum_{1\leq i_1<\cdots < i_k \leq \lceil q/2 \rceil} \N_T(D_{i_1},\ldots,D_{i_k},V(P))\\
&\leq \binom{q+1}{2} + \sum_{j=1}^{\lceil q/2 \rceil} |D_j| + \sum_{p=0}^{q-1}\sum_{j=1}^{\lfloor q/2 \rfloor -p}  (2^{|D_j|} -1)(2^{|D_{j+1}|} -1)\cdots (2^{|D_{j+p}|} -1)\,,
\end{align*}
with equality for $T\in \mathcal{C}_S$ only if $C_1=C_2=\cdots=C_{\lfloor q/2 \rfloor}=\emptyset$, or  $C_{q}=C_{q-1}=\cdots=C_{1+\lceil q/2 \rceil}=\emptyset$. In other words, equality holds for $T\in \mathcal{C}_S$ only if $T$ is isomorphic to the caterpillar $\mathbb{T}(m_l-2,m_{l-1}-2,\ldots,m_2-2)$. Recall that $q+1=b_l,~b_l-b_1=\lfloor b_l/2 \rfloor=\lceil q/2 \rceil$ and that
\[\textrm{ $|D_1| = m_l$ and $|D_j| = m_{l+1-j}-2$ for all  $j \neq 1$.}\] In particular, we obtain
\begin{align*}
\N(\mathbb{T}(m_l-2,&m_{l-1}-2,\ldots,m_2-2))= \binom{q+1}{2} + \sum_{j=1}^{\lceil q/2 \rceil} |D_j| + \sum_{p=0}^{q-1}\sum_{j=1}^{\lfloor q/2 \rfloor -p}  \prod_{i=0}^p (2^{|D_{j+i}|} -1)\\
&=\binom{b_l}{2} + m_l+ \sum_{j=2}^{l-1} (m_j-2) \\
&\quad \quad + \sum_{p=0}^{b_l-2} \Big( (2^{m_l} -1)\prod_{i=1}^p (2^{m_{l-i}-2} -1) ~+ \sum_{j=2}^{l-3+m_1 -p}  \prod_{i=0}^p (2^{m_{l+1-i-j}-2} -1) \Big)
\end{align*}
since $\lfloor q/2 \rfloor =l-3+m_1$. This completes the proof of the theorem.
\end{proof}

\begin{remark}
It is not hard to see from the proof of Theorem~\ref{SubtreeMainTheo} that $$\mathbb{T}(\underbrace{0,0,\ldots,0}_{\lfloor d/2 \rfloor -1~0's},n-d-1)$$ is the tree of order $n$ and diameter $d>1$ that has the maximum number of subtrees; see also~\cite{chen2018number}. However, it is an open problem to determine an exact sharp lower bound on the number of subtrees with order $n$ and diameter $d>5$. This suggests that the problem of finding the minimum number of subtrees among all trees with a prescribed eccentric sequence can be very difficult.
\end{remark}

\section{Concluding remarks}
Let $G$ be a connected graph whose vertex and edge sets are $V(G)$ and $E(G)$, respectively. For $u\in V(G)$ and $e=vw \in E(G)$, the distance between vertex $u$ and edge $e$ is $\min\{d(u,v),d(u,w)\}$. For $f \in E(G)$, the distance (as defined in~\cite{khalifeh2009some}) between edges $e$ and $f$ is $\min \{d(v,f),d(w,f)\}$. There are some variants of the Wiener index of a graph, which include the \emph{edge Wiener index} $W_e$, the \emph{vertex-edge Wiener index} $W_{ve}$, the \emph{Schultz index} (also known as the \emph{degree distance}) $W_{+}$, and the Gutman index $W_{-}$. They are defined as
\begin{align*}
W_e(G)&=\sum_{\{e,f\}\subseteq E(G)} d(e,f)\,, \quad W_{ve}(G)=\frac{1}{2}\sum_{\substack{v\in V(G)\\ e\in E(G)}} d(v,e)\,, \\ W_{+}(G)&=\sum_{\{u,v\}\subseteq V(G)} d(u,v) (\deg(u) + \deg(v))\,, ~ W_{-}(G)=\sum_{\{u,v\}\subseteq V(G)} d(u,v) \deg(u) \deg(v)\,,
\end{align*}
where $\deg(u)$ denotes the degree of $u$ in $G$. In~\cite{gutmanschultz1994,khalifeh2009some,klein1992molecular} it was shown that for $n$-vertex trees $T$, all the above invariants are closely related to the Wiener index, namely that  
\begin{align*}
W_e(T)&=W(T)-(n-1)^2\,, \quad  W_{ve}(T)=W(T)-n(n-1)/2\,, \\
W_{+}(T)&=4W(T) -n(n-1)\,, \quad W_{-}(T)=4W(T) -(n-1)(2n-1)\,.
\end{align*}

There is yet another measure of distance between two edges. In~\cite{DankelmannEdgeWiener} the distance $d'(e,f)$ between edges $e$ and $f$ of $G$ is defined to be the distance between the corresponding vertices in the line graph of $G$. It is easy to see that
$d'(e,f)=d(e,f)+1$ and that for $n$-vertex trees $T$, 
\begin{align*}
W'_e(T):=\sum_{\{e,f\}\subseteq E(G)} d'(e,f)=W'_e(T) +n-1\,.
\end{align*}

\begin{corollary}
Let $\mathcal{I}(.) \in \{W'_e(.), W_e(.),W_{ve}(.),W_{+}(.),W_{-}(.)\}$. Then $\mathbb{T}(m_l-2,m_{l-1}-2,\ldots,m_2-2)$ is the unique tree with eccentric sequence $S=(b_1^{(m_1)},b_2^{(m_2)},\ldots, b_l^{(m_l)})$ that minimises $\mathcal{I}(.)$.
\end{corollary}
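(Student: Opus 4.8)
The plan is to reduce the minimisation of each of the five invariants to the minimisation of the ordinary Wiener index, which has already been completely settled in Theorem~\ref{MainTheo}. The decisive observation is that every tree in $\mathcal{T}_S$ has one and the same order, namely $n=m_1+m_2+\cdots+m_l$, because the number of vertices of a graph is exactly the sum of the multiplicities in its eccentric sequence. Thus $n$ is a constant of the class $\mathcal{T}_S$, and this is what makes the linear identities collected at the start of this section do all the work.

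First I would record those identities, each of which expresses the invariant in question as an affine function of $W$ whose leading coefficient is strictly positive and whose additive term depends only on $n$. Concretely, for $T\in\mathcal{T}_S$ one has $\mathcal{I}(T)=c_{\mathcal{I}}\,W(T)+g_{\mathcal{I}}(n)$, where $c_{\mathcal{I}}=1$ for $\mathcal{I}\in\{W_e,W_{ve},W'_e\}$ and $c_{\mathcal{I}}=4$ for $\mathcal{I}\in\{W_{+},W_{-}\}$; for instance $W'_e(T)=W(T)-(n-1)^2+(n-1)$. Since $n$ is fixed throughout $\mathcal{T}_S$, the additive term $g_{\mathcal{I}}(n)$ is the same for every tree under consideration, so comparisons of $\mathcal{I}$ reduce to comparisons of the single term $c_{\mathcal{I}}\,W(T)$.

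Because $c_{\mathcal{I}}>0$ in every case, the map $W\mapsto c_{\mathcal{I}}\,W+g_{\mathcal{I}}(n)$ is strictly increasing; hence for any two trees $T_1,T_2\in\mathcal{T}_S$ we have $\mathcal{I}(T_1)<\mathcal{I}(T_2)$ if and only if $W(T_1)<W(T_2)$. In particular a tree minimises $\mathcal{I}$ over $\mathcal{T}_S$ precisely when it minimises $W$ over $\mathcal{T}_S$, and the uniqueness of the extremiser is preserved by the strict monotonicity. By Theorem~\ref{MainTheo} the unique tree in $\mathcal{T}_S$ of minimum Wiener index is $\mathbb{T}(m_l-2,m_{l-1}-2,\ldots,m_2-2)$, so it is simultaneously the unique minimiser of each of $W'_e,W_e,W_{ve},W_{+},W_{-}$, as claimed. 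There is no substantive obstacle here: the only point requiring (routine) care is checking that the constant $n$ really is invariant across $\mathcal{T}_S$ and that every coefficient $c_{\mathcal{I}}$ is strictly positive, for it is exactly the positivity of $c_{\mathcal{I}}$ that guarantees the affine substitution preserves rather than reverses the ordering.
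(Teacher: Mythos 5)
Your proposal is correct and is exactly the argument the paper intends: the corollary follows immediately from the displayed identities expressing each invariant as an increasing affine function of $W(T)$ with additive term depending only on the order $n$, which is constant on $\mathcal{T}_S$, combined with the uniqueness statement of Theorem~\ref{MainTheo}. The paper leaves this reasoning implicit, so no further comparison is needed.
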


On the other hand, there are other Wiener-type indices which are in no relation with the Wiener index. Some of them were shown to correlate better with various physico-chemical properties of certain molecules and molecular structures than the classical Wiener index~\cite{gutman2003note,randic2001interpretation}. Two such variants include
\[\textrm{$HW(G)=\sum_{{u,v}\subseteq V(G)}\binom{1+d(u,v)}{2}$ ~~ and ~~ $
W(G;\lambda)=\sum_{{u,v}\subseteq V(G)} d(u,v)^{\lambda}$,}\]
where $\lambda \neq 0$ is any given real number. It seems (experimentally) that given the eccentric sequence, the tree $\mathbb{T}(m_l-2,m_{l-1}-2,\ldots,m_2-2)$ minimises both $HW(G)$ and $W(G;\lambda)$ for every $\lambda \geq 1$. The authors are continuing this investigation.

\end{document}